\def\fC{\mathfrak{C}}
\def\e{\varepsilon}
\begin{document}

\title{Frobenius statistical manifolds $\&$ geometric invariants\\}
%

%
\author{Noemie Combe \and
Philippe Combe\and Hanna Nencka\thanks{This research was supported by the Max Planck Society's Minerva grant. The authors express their gratitude towards MPI MiS for excellent working conditions.}}
\authorrunning{N. Combe, P. Combe, H. Nencka}
%

\institute{Max Planck Institute for Maths in the Sciences, Inselstrasse 22 ,04103 Leipzig, Germany\\ noemie.combe@mis.mpg.de}
%
\vskip-1cm
\maketitle              
\vskip-1cm

\begin{abstract}
In this paper, we explicitly prove that statistical manifolds, related to exponential families and with flat structure connection have a Frobenius manifold structure. This latter object, at the interplay of beautiful interactions between topology and quantum field theory, raises natural questions, concerning the existence of Gromov--Witten invariants for those statistical manifolds. We prove that an analog of Gromov--Witten invariants for those statistical manifolds (GWS) exists. Similarly to its original version, these new invariants have a geometric interpretation concerning intersection points of para-holomorphic curves. However, it also plays an important role in the learning process, since it determines whether a system has succeeded in learning or failed. \keywords{Statistical manifold  \and Frobenius manifold \and Gromov--Witten invariants \and Paracomplex geometry}\\
{\bf Mathematics Subject Classification} {53B99, 62B10, 60D99, 53D45}
\end{abstract}

\vskip-.2cm
\section{Introduction}
\vskip-.2cm
{\small For more than 60 years statistical manifolds have been a domain of great interest in information theory\, \cite{Ba19}, machine learning~\cite{Am85,Jost,BCN99,Ce3} and in decision theory~\cite{Ce3}. 

Statistical manifolds (related to  exponential families) have an $F$-manifold structure, as was proved in \cite{CoMa}. The notion of $F$-manifolds,  developed in \cite{HeMa99}, arose in the context of mirror symmetry. It is a version of classical Frobenius manifolds, requiring less axioms. 

In this paper, we restrict our attention to statistical manifolds related to {\bf exponential families} (see \cite{Ce3} p.265 for a definition), {\bf being totally geodesic maximal submanifolds} (see \cite{Ce3} p.182 and see \cite{KoNo} p. 180 for foundations of differential geometry), and which have Markov invariant metrics. The latter condition implies the existence of affine flat structures. Using  a {\it purely algebraic framework}, we determine the necessary condition to have a Frobenius manifold structure. 
It is possible to encapsulate a necessary (and important) property of Frobenius' structure within the Witten--Dijkgraaf--Verlinde--Verlinde (WDVV) highly non-linear PDE system: 
\begin{equation}\label{E:WDVV}\forall a,b,c,d: \underset{ef}{\sum}\varPhi_{abe}g^{ef}\varPhi_{fcd}=(-1)^{a(b+c)}\underset{ef}{\sum}\varPhi_{bce}g^{ef}\varPhi_{fad}.\end{equation}
Notice that the (WDVV) system expresses {\it a flatness condition of the manifold} (i.e. vanishing of the curvature), see \cite{Jost,Man99,Sik}.

However, we investigate the (WDVV) condition from a purely {\it algebraic framework}, giving a totally different insight on this problem, and thus avoiding tedious computations of differential equations. 
{\it Algebraically} the (WDVV) condition is expressed by the associativity and potentiality conditions below (see \cite{Man99}, p. 19-20 for a detailed exposition):
\begin{itemize}
\item[$\ast$] {\bf Associativity:} For any (flat) local tangent fields $u,v,w$, we have: \begin{equation}\label{Asso}t(u,v,w)=g(u\circ v,w)= g(u, v\circ w),\end{equation} where $t$ is a rank 3 tensor, $g$ is a metric and $\circ$ is a multiplication on the tangent sheaf.
\item[$\ast$] {\bf Potentiality:} $t$  admits locally everywhere locally a potential function $\Phi$ such that, for any local tangent fields $\partial_i$ we have \begin{equation}\label{Pot}t(\partial_a,\partial_b,\partial_c)=\partial_a\partial_b\partial_c\Phi.\end{equation}
\end{itemize}

We prove explicitly that statistical manifolds  related to exponential families with a flat structure connection (i.e. $\alpha=\pm1 $, in the notation of Amari, indexing the connection $\overset{\alpha}{\nabla}$, see \cite{Am85}) \underline{strictly obey to the axioms of a {\it Frobenius manifold}}. This algebraic approach to statistical Frobenius manifolds allows us to define an analog of Gromov--Witten invariants: the statistical Gromov--Witten invariants (GWS). This plays an important role in the learning process, since it determines whether a system has succeeded in learning or failed. Also, it has a geometric interpretation (as its original version) concerning the intersection of (para-)holomorphic curves.

In this short note, we do not discuss the Cartan--Koszul bundle approach of the affine and projective connection \cite{Ba19} and which is related to the modern algebraic approach of the Kazan--Moscow school \cite{No58,Ro97,Sh02}. 

 
\section{Statistical manifolds $\&$ Frobenius manifolds}
\vskip-.2cm
We rapidly recall material from the previous part (\cite{CoCoNen}). Let $(\Omega,\mathcal{F},\lambda)$ be a measure space, where $\mathcal{F}$ denotes the $\sigma$-algebra of elements of $\Omega$, and $\lambda$ is a $\sigma$-finite measure. We consider the family of parametric probabilities $\frak{S}$ on the measure space $(\Omega,\mathcal{F})$, absolutely continuous wrt $\lambda$. We denote by $\rho_\theta= \frac{dP_{\theta}}{d\lambda}$, the Radon--Nikodym derivative of $P_{\theta}\in\frak{S}$, wrt to $\lambda$ and denote by $S$ the associated family of probability densities of the parametric probabilities. We limit ourselves to the case where $S$ is a smooth topological manifold. 

\[S= \left\{ \rho_{\theta} \in L^1(\Omega, \lambda),\  \theta = \{\theta_1,\dots \theta_n\}; \ \rho_\theta>0\  \lambda - a.e., \ \int_{\Omega}\rho_\theta d\lambda=1\right\}.\]

This generates the space of probability measures absolutely continuous  with respect to the measure $\lambda$, i.e.
$P_{\theta}(A)=\int_A  \rho_{\theta}d\lambda$ where $A\subset \mathcal{F}$.

We construct its tangent space as follows. Let $u\in L^{2}(\Omega, P_{\theta})$ be a tangent vector to $S$ at the point $\rho_{\theta}$. 
\[T_{\theta}=\left\{ u\in L^{2}(\Omega, P_{\theta}); \mathbb{E}_{P_{\theta}}[u]=0, u=\sum_{i=1}^{d}u^i\partial_{i}\ell_{\theta}         \right\},\] 
where $\mathbb{E}_{P_{\theta}}[u]$ is the expectation value, w.r. to the probability $P_{\theta}$.
  
The tangent space of $S$  is isomorphic to the $n$-dimensional linear space generated by the centred random variables (also known as score vector) $\{ \partial_{i}\ell_\theta\}_{i=1}^n$, where $\ell_{\theta} = \ln \rho_{\theta}.$ 

 In 1945, Rao \cite{Rao45} introduced the Riemannian metric on a statistical manifold, using the Fischer information matrix. The statistical manifold forms a (pseudo)-Riemannian manifold. 

In the basis, where $\{ \partial_{i}\ell_\theta\}, \{i=1,\dots,n\}$ where $\ell_{\theta} = \ln \rho_{\theta},$  the Fisher metric are just the {\it covariance matrix} of the score vector. Citing results of \cite{BCN99} (p89) we can in particular state that:

\[  g_{i,j}(\theta)=\mathbb{E}_{P_{\theta}}[\partial_i\ell_{\theta}\partial_{j}\ell_{\theta}]\]
\[  g^{i,j}(\theta)=\mathbb{E}_{P_{\theta}}[a^{i}_{\theta}a^{j}_{\theta}], \]

where  $\{a^{i}\}$ form a dual basis to $\{\partial_j\ell_{\theta}\}$:
\[a^{i}_{\theta}(\partial_j\ell_{\theta})=\mathbb{E}_{P_{\theta}}[a^{i}_{\theta}\partial_j\ell_{\theta}]=\delta^i_{j}\]
with
\[\mathbb{E}_{P_{\theta}}[a^{i}_{\theta}]=0.\]
 
 \begin{definition}
A Frobenius manifold  is a manifold $M$ endowed with an {\it affine flat structure}\footnote{Here the affine flat structure is equivalently described as complete atlas whose transition functions are affine linear. Since the statistical manifolds are (pseudo)-Riemannian manifolds this condition is fulfilled.}, a compatible metric $g$, and an even symmetric rank 3 tensor $t$. Define a symmetric bilinear multiplication on the tangent bundle:
\[\circ: TM \otimes TM\to TM.\]
$M$ endowed with these structures is called {\it pre-Frobenius.}
\end{definition}
 \begin{definition} A pre-Frobenius manifold is Frobenius if it verifies the associativity and potentiality properties defined in the introduction as equations (\ref{Asso}) and (\ref{Pot}).
\end{definition}

\begin{example}
 Take the particular case of $(M, g,t)$, where $t=\nabla -\nabla^*=0$ and $\nabla\neq0$. So, $\nabla=\nabla^*$. 
In this case, the WDVV condition is not satisfied, and therefore the Frobenius manifold axioms are not fulfilled. Indeed, as stated in \cite{Jost} p.199, the (WDVV) equations holds only if the curvature vanishes. Therefore, if $t=0$ and if the manifold has a non flat curvature i.e. $\nabla\neq0$ then this manifold is not a Frobenius one.
\end{example}

We now discuss the necessary conditions to have a  statistical Frobenius manifold. As was stated above, {\bf our attention throughout this paper is restricted only to exponential families, having Markov invariant metrics.}

Let $(S,g,t)$ be a statistical manifold equipped with the (Fischer--Rao) Riemannian metric $g$ and a 3-covariant tensor field $t$ called the {\it skewness tensor}. It is a covariant tensor of rank 3 which is fully symmetric:
\[t:TS \times TS \times TS\to \mathbb{R},\]
given by 
\[ t|_{\rho_{\theta}}(u,v,w)= \mathbb{E}_{P_{\theta}}[u_{\theta}v_{\theta}w_{\theta}]. \]
In other words, in the score coordinates, we have:  
\[t_{ijk}(\theta)=\mathbb{E}_{P_{\theta}}[\partial_i\ell_{\theta}\partial_j\ell_{\theta}\partial_k\ell_{\theta}].\]
Denote the mixed tensor by
$\overline{t}=t.g^{-1}$. It is bilinear map $\overline{t}:TS \times TS \to TS$, given componentwise by: 

\begin{equation}\label{E:1}
\overline{t}^{k}_{ij}= g^{km} t_{ijm},\end{equation}
where $g^{km}=\mathbb{E}_{P_{\theta}}[a^{k}_{\theta}a^{m}_{\theta}]$.
{\bf NB:} This is written using Einstein's convention. 
\begin{remark}The Einstein convention will be used throughout this paper, whenever needed. \end{remark}
We have:
\[\overline{t}_{ij}^k=\overline{t}|_{\rho_{\theta}}(\partial_i\ell_{\theta},\partial_j\ell_{\theta},a^{k})=\mathbb{E}_{P_{\theta}}[\partial_i\ell_{\theta}\partial_j\ell_{\theta}a^k_{\theta}]. \] 
As for the connection, it is given by: 
\[ \overset{\alpha}{\nabla}_XY=\overset{0}{\nabla}_XY + \frac{\alpha}{2}\overline{t}(X,Y), \quad \alpha\in \mathbb{R}, X,Y\in T_{\rho}S\]
where $\overset{\alpha}{\nabla}_XY$ denotes the $\alpha$-covariant derivative.
\begin{remark}
Whenever we have a pre-Frobenius manifold $(S,g,t)$ we call  the connection $\overset{\alpha}{\nabla}$  the {\it structure connection}. 
\end{remark}

In fact $ \overset{\alpha}{\nabla}$ is the unique torsion free connection satisfying: 

\[ \overset{\alpha}{\nabla}g=\alpha t, \] i.e.

\[ \overset{\alpha}{\nabla}_Xg(Y,Z)=\alpha t(X,Y,Z).\]


\begin{proposition}
The tensor $\overline{t}:TS \times TS \to TS$ allows to define a multiplication $\circ$ on $TS$, such that
for all $u,v, \in T_{\rho_{\theta}}S$, we have: 
\[u\circ v = \overline{t}(u,v).\]
 \end{proposition}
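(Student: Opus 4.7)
The plan is to verify directly from the definition of $\overline{t}$ that the rule $u \circ v := \overline{t}(u,v)$ satisfies the three properties expected of a binary operation on the tangent bundle: it should be $\mathbb{R}$-bilinear, tangent-vector-valued (so that $\circ$ is indeed an internal operation on $T_{\rho_\theta}S$), and symmetric in its two arguments. Symmetry is what will let us call $\circ$ a ``multiplication'' in the pre-Frobenius sense, and it is the only nontrivial thing to see.

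First I would write arbitrary tangent vectors in the score basis, $u = u^i\partial_i\ell_\theta$ and $v = v^j\partial_j\ell_\theta$, and use equation (\ref{E:1}) to compute the components
\[
(u\circ v)^k \;=\; u^i v^j\,\overline{t}^{\,k}_{ij} \;=\; u^i v^j\, g^{km}\, t_{ijm}.
\]
Bilinearity of $\circ$ is then immediate from the $\mathbb{R}$-multilinearity of the tensor $\overline{t}$, and the output lives in $T_{\rho_\theta}S$ because the free index $k$ is of the tangent type: using the dual basis $\{a^k_\theta\}$ characterized by $a^k_\theta(\partial_j\ell_\theta) = \delta^k_j$, the raising $g^{km}t_{ijm}$ contracts against the metric to land precisely in the tangent space. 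Thus $\circ : TS\otimes TS \to TS$ is well defined.

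Next I would check symmetry. The skewness tensor $t_{ijm} = \mathbb{E}_{P_\theta}[\partial_i\ell_\theta\,\partial_j\ell_\theta\,\partial_m\ell_\theta]$ is fully symmetric in all three indices, since the expectation of a product of scalar random variables is insensitive to ordering. Contracting only the third index with $g^{km}$ preserves the symmetry in the remaining two, so $\overline{t}^{\,k}_{ij} = \overline{t}^{\,k}_{ji}$ and hence $u\circ v = v\circ u$.

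Finally, as a companion observation, lowering the raised index back down recovers the identity $g(u\circ v,w) = t(u,v,w)$, because
\[
g_{k\ell}\,(u\circ v)^k w^\ell \;=\; u^i v^j w^\ell\, g_{k\ell}\, g^{km}\, t_{ijm} \;=\; u^i v^j w^\ell\, \delta^{m}_{\ell}\, t_{ijm} \;=\; t(u,v,w),
\]
which is the ``half'' of the associativity identity (\ref{Asso}) that holds purely formally, independently of any WDVV condition. I do not anticipate a genuine obstacle here: the proposition is essentially a repackaging of the index-raising that defines $\overline{t}$, together with the full symmetry of $t$. The real difficulty is deferred to the subsequent verification of associativity $g(u\circ v,w) = g(u,v\circ w)$ and of the existence of a local potential $\Phi$, where one must exploit the specific structure of exponential families and the flatness of the $\alpha = \pm 1$ connection.
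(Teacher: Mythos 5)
Your proof is correct and follows essentially the same route as the paper: both define $\circ$ through the structure constants $\overline{t}^{\,k}_{ij}=g^{km}t_{ijm}$ in the score basis. You are somewhat more thorough than the paper's (very terse) proof, since you also verify bilinearity, commutativity via the full symmetry of $t$, and the identity $g(u\circ v,w)=t(u,v,w)$, which the paper defers to its associativity lemma.
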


\begin{proof}
By construction, in local coordinates, for any $u,v, \in T_{\rho_{\theta}}S$, we have $u=\partial_i\ell_{\theta}$ and $v=\partial_j\ell_{\theta}$.
In particular, $\partial_i\ell_{\theta}\otimes  \partial_j\ell_{\theta}=\overline{t}^{k}_{ij}\partial_k\ell_{\theta}$, which by calculation turns to be 
$\mathbb{E}_{P_{\theta}}[\partial_i\ell_{\theta}\partial_j\ell_{\theta}a^k_{\theta}].$
\end{proof}

\begin{lemma}\label{L:Ass}
For any local tangent fields $u,v,w\in  T_{\rho_{\theta}}S$ the associativity property holds: 
\[g(u\circ v, w)=g(u, v\circ w).\]
\end{lemma}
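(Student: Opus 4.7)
The plan is to reduce both sides of the asserted equality to the fully symmetric rank-$3$ tensor $t$ evaluated on $(u,v,w)$, using the fact that $\overline{t}$ is obtained from $t$ by raising one index with $g^{-1}$.

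First I would work in local score coordinates, writing $u = u^{i}\partial_{i}\ell_{\theta}$, $v = v^{j}\partial_{j}\ell_{\theta}$, $w = w^{l}\partial_{l}\ell_{\theta}$. By the proposition just proved, $u \circ v = \overline{t}(u,v) = \overline{t}^{k}_{ij}u^{i}v^{j}\,\partial_{k}\ell_{\theta}$. Substituting into the left-hand side gives
\[
g(u \circ v, w) \;=\; g_{kl}\,\overline{t}^{k}_{ij}\,u^{i}v^{j}w^{l} \;=\; g_{kl}\,g^{km}t_{ijm}\,u^{i}v^{j}w^{l} \;=\; \delta^{m}_{l}\,t_{ijm}\,u^{i}v^{j}w^{l} \;=\; t_{ijl}\,u^{i}v^{j}w^{l}.
\]
The same computation applied to the right-hand side yields $g(u, v \circ w) = t_{jki}\,u^{i}v^{j}w^{k}$.

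Next I would invoke the full symmetry of the skewness tensor $t$. This symmetry is not an extra hypothesis but follows immediately from the defining formula $t_{ijk}(\theta) = \mathbb{E}_{P_{\theta}}[\partial_{i}\ell_{\theta}\,\partial_{j}\ell_{\theta}\,\partial_{k}\ell_{\theta}]$, since the integrand is a product of scalar functions and hence invariant under arbitrary permutations of $i,j,k$. Consequently $t_{ijl} = t_{jli} = t_{lij}$, and both expressions above coincide with $t(u,v,w)$, proving the associativity identity.

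There is no real obstacle here: the statement is essentially a bookkeeping lemma expressing that raising an index of a totally symmetric $(0,3)$-tensor with the metric yields a $(1,2)$-tensor whose associated bilinear product is automatically compatible with $g$. The only subtle point worth flagging, to keep the exposition honest, is the verification that $t_{ijk}$ is indeed totally symmetric under the chosen coordinates, which is immediate from the definition but deserves to be stated; everything else is contraction with $g_{kl}g^{km} = \delta^{m}_{l}$.
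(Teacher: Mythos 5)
Your proof is correct and follows essentially the same route as the paper: both sides are reduced to $t_{ijl}u^iv^jw^l$ by raising an index of $t$ with $g^{-1}$ and then contracting back with $g$, and the two sides are identified via the total symmetry of $t$. You are in fact slightly more careful than the paper's own proof, which hides the appeal to the symmetry of $t_{ijk}$ behind the phrase ``by some calculations''; making that step explicit, as you do, is exactly the right thing.
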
 

\begin{proof}
Let us start with the left hand side of the equation. Suppose that $u= \partial_i\ell_{\theta}$, $v= \partial_j\ell_{\theta}$, $w= \partial_l\ell_{\theta}$. By previous calculations: $\partial_i\ell_{\theta}\circ  \partial_j\ell_{\theta}= \overline{t}^{k}_{ij}\partial_k\ell_{\theta}$.
Insert this result into $g(u\circ v, w)$, which gives us $g(\partial_i\ell_{\theta}\circ  \partial_j\ell_{\theta}, \partial_l\ell_{\theta}),$
and leads to $g( \overline{t}^{k}_{ij}\partial_k\ell_{\theta}, \partial_l\ell_{\theta})$. By some calculations and formula (\ref{E:1}) it turns out to be equal to $t(u,v,w)$.

Consider the right hand side. Let $g(u,v \circ w)=g(\partial_i\ell_{\theta},\partial_j\ell_{\theta}\circ   \partial_l\ell_{\theta}).$
Mimicking the previous approach, we show that this is equivalent to $g(\partial_i\ell_{\theta} , \overline{t}^{k}_{jl}\partial_l\ell_{\theta})$, which  is equal to $t(u,v,w)$.

\end{proof}



\begin{theorem}
The statistical manifold $(S,g,t)$, related to exponential families, is a Frobenius manifold only for $\alpha=\pm1$.
\end{theorem}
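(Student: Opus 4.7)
The plan is to verify the two remaining Frobenius axioms—existence of an affine flat structure carried by $\overset{\alpha}{\nabla}$ and potentiality of $t$—since associativity has already been established in Lemma \ref{L:Ass} and holds for every $\alpha$. Thus the parameter $\alpha$ enters only through the structure connection, and the entire theorem reduces to deciding for which $\alpha$ the connection $\overset{\alpha}{\nabla}$ is flat and $t$ admits a local potential in its flat coordinates.

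First, I would invoke the canonical parametrization of an exponential family, writing $\ell_\theta = \theta^i F_i - \psi(\theta) + C(x)$ for a log-partition function $\psi$. Differentiating under the integral and using $\mathbb{E}_{P_\theta}[\partial_i \ell_\theta]=0$ yields the moment identities $g_{ij}(\theta)=\partial_i\partial_j\psi(\theta)$ and $t_{ijk}(\theta)=\partial_i\partial_j\partial_k\psi(\theta)$. Hence $\psi$ serves as a local potential for $t$ in the natural $\theta$-coordinates, and by Legendre duality the dual potential $\psi^{*}(\eta)$ serves the same role in the expectation coordinates $\eta_i := \partial_i\psi$. So the potentiality axiom (\ref{Pot}) holds in either chart as soon as that chart is affine flat for $\overset{\alpha}{\nabla}$.

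Next, using $\overset{\alpha}{\nabla} g = \alpha t$ together with $\partial_k g_{ij}=t_{ijk}$, I would compute that in $\theta$-coordinates the Christoffel symbols of $\overset{\alpha}{\nabla}$ take the form $\Gamma^{(\alpha)}_{ij,k}=\tfrac{1-\alpha}{2}\,t_{ijk}$. Consequently $\theta$ is affine flat for $\overset{1}{\nabla}$, and by the dual argument the expectation parameters $\eta$ are affine flat for $\overset{-1}{\nabla}$. Combined with the previous paragraph, both Frobenius axioms hold whenever $\alpha=\pm 1$.

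The converse—ruling out every other value of $\alpha$—is the main obstacle. Expanding $\overset{\alpha}{\nabla}=\overset{0}{\nabla}+\tfrac{\alpha}{2}\overline{t}$, the curvature $R^{(\alpha)}$ is a polynomial of degree at most two in $\alpha$. Since $R^{(1)}=R^{(-1)}=0$ has just been established, its two roots are $\pm 1$ and one obtains the closed form
\[
R^{(\alpha)} \;=\; \tfrac{\alpha^{2}-1}{4}\,[\overline{t},\overline{t}],
\]
where $[\overline{t},\overline{t}]$ denotes the Nijenhuis-type bracket produced by the quadratic term of the expansion. The delicate step is to argue that $[\overline{t},\overline{t}]$ is not identically zero on a nondegenerate exponential family, which I would verify by exhibiting an explicit model (for instance a multivariate Gaussian location-scale family) whose skewness tensor has non-vanishing bracket. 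Once this nondegeneracy is secured, $R^{(\alpha)}=0$ forces $\alpha^{2}=1$, so the affine flat structure—and hence the Frobenius structure—can exist only at $\alpha=\pm 1$.
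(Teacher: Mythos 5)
Your overall route is the same as the paper's: associativity is already secured by Lemma~\ref{L:Ass}, so everything reduces to deciding for which $\alpha$ the structure connection $\overset{\alpha}{\nabla}$ is flat (the paper performs this reduction by citing Manin's Theorem~1.5 and then simply asserts the flatness computation). Your forward direction is a genuine improvement in explicitness: the identities $g_{ij}=\partial_i\partial_j\psi$, $t_{ijk}=\partial_i\partial_j\partial_k\psi$ and $\Gamma^{(\alpha)}_{ij,k}=\tfrac{1-\alpha}{2}t_{ijk}$ in the canonical chart, together with Legendre duality for $\alpha=-1$, supply exactly the ``direct computation'' the paper leaves implicit, and they deliver the potentiality of $t$ for free.

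The gap is in the converse, which is the actual content of the word ``only.'' Your formula $R^{(\alpha)}=\tfrac{\alpha^{2}-1}{4}[\overline{t},\overline{t}]$ is correct (the term linear in $\alpha$ is killed by $R^{(1)}=R^{(-1)}=0$), but the conclusion then hinges on $[\overline{t},\overline{t}]\not\equiv 0$, which by the same interpolation is equivalent to the non-flatness of the Levi-Civita connection of the Fisher--Rao metric, since $R^{(0)}=-\tfrac14[\overline{t},\overline{t}]$. Verifying this on one multivariate Gaussian model proves the converse only for that model, not for an arbitrary exponential family; and the universal claim in fact fails, e.g., for any one-dimensional exponential family, or for a Gaussian location family with fixed covariance (where $t\equiv 0$), in which every $\overset{\alpha}{\nabla}$ is flat and the manifold is Frobenius for all $\alpha$. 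To close the argument you must either add a nondegeneracy hypothesis ($[\overline{t},\overline{t}]\neq 0$, equivalently non-vanishing Fisher curvature) or restrict the ``only if'' to such families. Note that the paper's own proof conceals exactly the same issue behind the phrase ``by a direct computation,'' so your write-up at least makes visible where the missing hypothesis lives.
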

\begin{proof}
The statistical manifold $S$ comes equipped with a Riemannian metric $g$, and a skew symmetric tensor $t$.
We have proved that $(S,g,t)$ is an associative pre-Frobenius manifold (the associativity condition is fulfilled, by Lemma \ref{L:Ass}). It remains to show that it is Frobenius.  
We invoke the theorem 1.5 of \cite{Man99}, p. 20 stating that the triplet $(S,g,t)$ is Frobenius if and only if the {\it structure connection} $\overset{\alpha}{\nabla}$ is flat.
The pencil of connections depending on a parameter $\alpha$ are defined by:
\[ \overset{\alpha}{\nabla}_XY=\overset{0}{\nabla}_XY + \frac{\alpha}{2}(X\circ Y), \quad \alpha\in \mathbb{R}, X,Y\in T_{\rho}S\]
where $\overset{\alpha}{\nabla}_XY$ denotes the $\alpha$-covariant derivative.
By a direct computation, we show that only for $\alpha=\pm1$, the structure connection is flat. Therefore, the conclusion is straightforward.  
\end{proof}
The (WDVV)  PDE  version expresses geometrically  a flatness condition for a given manifold. We establish the following connection. 
\begin{proposition}
For $\alpha=\pm 1$, the (WDVV) PDE system are always (uniquely) integrable over $(S,g,t)$. 
\end{proposition}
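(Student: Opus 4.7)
The plan is to combine the Frobenius structure just obtained for $\alpha=\pm 1$ with an iterated application of the Poincar\'e lemma. By the preceding theorem, for $\alpha=\pm 1$ the structure connection $\overset{\alpha}{\nabla}$ is flat and torsion-free, so around every $\rho_{\theta}\in S$ there exist local affine flat coordinates $\{\theta_1,\dots,\theta_n\}$ in which the Christoffel symbols vanish, the ordinary partials $\partial_a$ agree with $\overset{\alpha}{\nabla}_{\partial_a}$, and the metric $g$ has constant components $g_{ab}$.

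First, I would translate the associativity of Lemma \ref{L:Ass} into these flat coordinates. Writing $t_{abc}$ for the components of $t$ and using the mixed-tensor formula (\ref{E:1}) in the form $\overline{t}^{k}_{ab}=g^{km}t_{abm}$, the identity $g(u\circ v,w)=g(u,v\circ w)$, combined with the full symmetry of $t$ and raising/lowering of one index with $g^{ef}$, produces exactly the (WDVV) identity
\[\sum_{ef}\,t_{abe}\,g^{ef}\,t_{fcd}=\sum_{ef}\,t_{bce}\,g^{ef}\,t_{fad}\]
for all $a,b,c,d$, i.e.\ the associativity content of (\ref{E:WDVV}).

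Second, I would produce the potential $\Phi$ from $t$ by iterating the Poincar\'e lemma. Because $\overset{\alpha}{\nabla}$ is flat for $\alpha=\pm 1$, the defining identity $\overset{\alpha}{\nabla}g=\alpha t$ together with the symmetry of $t$ forces the closedness relation $\partial_d t_{abc}=\partial_a t_{dbc}$ in the flat chart. Hence there exist, successively and locally, a symmetric $\Psi_{ab}$, a covector $\Phi_a$, and finally a scalar $\Phi$ with $\partial_c\Psi_{ab}=t_{abc}$, $\partial_b\Phi_a=\Psi_{ab}$ and $\partial_a\Phi=\Phi_a$, which yields the potentiality equation (\ref{Pot}), namely $\partial_a\partial_b\partial_c\Phi=t_{abc}$. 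Inserting this expression for $t_{abc}$ into the associativity identity above gives the full (WDVV) PDE (\ref{E:WDVV}) on $\Phi$; the potential is unique up to a polynomial in $\theta$ of degree at most two, which contributes nothing to the third derivatives and can be fixed by prescribing the value and $\overset{\alpha}{\nabla}$-derivatives of $\Phi$ at a chosen base point.

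The main obstacle in this plan is the verification of the closedness condition $\partial_d t_{abc}=\partial_a t_{dbc}$ that initiates the Poincar\'e-lemma argument. It is exactly here that the choice $\alpha=\pm 1$ is indispensable: flatness of $\overset{\alpha}{\nabla}$ guarantees the vanishing of the mixed-partial discrepancies at each stage of the construction of $\Phi$, and equivalently ensures the self-consistency of the non-linear system (\ref{E:WDVV}). Once potentiality and associativity are in place, unique local integrability of (WDVV) on $(S,g,t)$ follows.
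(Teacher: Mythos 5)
Your overall strategy is the same as the paper's, but you actually carry it out: the paper's proof is a two-line appeal to the equivalence ``(WDVV) integrable $\iff$ curvature of the structure connection vanishes'' (Manin, Theorem 1.5), whereas you prove the relevant direction of that equivalence by hand --- extracting associativity of $\circ$ as the quadratic-in-$\alpha$ part of the flatness condition and potentiality via an iterated Poincar\'e lemma, and you even address what ``uniquely'' means (the potential is determined up to a quadratic polynomial, hence fixed by normalising $\Phi$ and its first two derivatives at a base point). That is a genuine improvement in content over the printed proof.

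There is, however, one slip in your setup that you should repair. You place yourself in a single chart in which simultaneously the Christoffel symbols of $\overset{\alpha}{\nabla}$ vanish \emph{and} $g$ has constant components. These two requirements are incompatible unless $t=0$: since $\overset{\alpha}{\nabla}g=\alpha t$, in an $\overset{\alpha}{\nabla}$-affine chart one has $\partial_c g_{ab}=\alpha\, t_{abc}$, so $g$ cannot be constant there for $\alpha=\pm 1$. The computation you want should be carried out in the flat coordinates of the Levi--Civita connection $\overset{0}{\nabla}$ (where $g$ \emph{is} constant and ordinary partials compute $\overset{0}{\nabla}$, not $\overset{\alpha}{\nabla}$); the flatness of $\overset{\pm 1}{\nabla}$ then enters by expanding its curvature in powers of $\alpha$: the term linear in $\alpha$ gives the total symmetry of $\overset{0}{\nabla}t$, i.e.\ your closedness relation $\partial_d t_{abc}=\partial_a t_{dbc}$, and the term quadratic in $\alpha$ gives associativity. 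In particular, closedness does not follow from ``$\overset{\alpha}{\nabla}g=\alpha t$ plus symmetry of $t$'' as you state, but from the vanishing of that $\alpha$-linear piece of the curvature. With this relocation of the argument (and the standing assumption, implicit in the paper, that $\overset{0}{\nabla}$ is itself flat so that such coordinates exist), your Poincar\'e-lemma construction and the resulting (WDVV) system for $\Phi$ go through as written.
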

\begin{proof}
The WDVV equations are always integrable if and only if the curvature is null. 
In the context of $(S,g,t)$ the curvature tensor of the covariant derivative is null for $\alpha=\pm 1$. Therefore, in this context the WDVV equation is always integrable (uniquely). \end{proof}

\begin{corollary}
The Frobenius manifold $(S,g,t)$, related to exponential families, and indexed by $\alpha=\pm1$ verifies the potentiality condition.\footnote{Another interpretation goes as follows. Since for $\alpha =\pm 1$, $(S,g,t)$ is a Frobenius manifold (i.e. a flat manifold), it satisfies the condition to apply theorem 4.4 in\, \cite{Jost}. There exist potential functions (which are convex functions) $\Psi(\theta)$ and $\Phi(\eta)$ such that the metric tensor is given by :
$g_{ij} = \frac{\partial^2}{\partial\theta^i\partial\theta^j}\Phi$ and  $g^{ij} = \frac{\partial^2}{\partial\eta^i\partial\eta^j}\Psi$, where there exists a pair of dual coordinate systems $(\theta,\eta)$ such that $\theta = \{\theta_1,\dots,\theta_n\}$ is $\alpha$-affine and $\eta = \{\eta_1,...,\eta_n\}$ is a $-\alpha$-affine coordinate system. Convexity refers to local coordinates and not to any metric.} 
\end{corollary}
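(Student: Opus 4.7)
The plan is to exhibit the potential function explicitly using the defining structure of an exponential family. Recall that any exponential family has densities of the form $\rho_\theta(x)=\exp(\theta^iT_i(x)-\psi(\theta))$, where $\psi$ is the log-partition function (equivalently, the cumulant generating function of the sufficient statistics $T_i$). By Amari's classification, the natural parameters $\theta=(\theta^1,\dots,\theta^n)$ are $\alpha=+1$-affine coordinates (the $e$-affine chart), and the expectation parameters $\eta_i:=\partial_i\psi=\mathbb{E}_{P_\theta}[T_i]$ are $\alpha=-1$-affine (the $m$-affine chart). The preceding theorem has already established that $\overset{\pm1}{\nabla}$ is flat, so each of these coordinate systems is genuinely affine flat for the corresponding structure connection, which is exactly the setting in which the potentiality condition~(\ref{Pot}) is to be checked.

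The core step is a direct computation of the derivatives of $\psi$ in the $\theta$-chart. From $\partial_i\ell_\theta=T_i-\partial_i\psi$ and the centering condition $\mathbb{E}_{P_\theta}[\partial_i\ell_\theta]=0$ one recovers $\partial_i\psi=\mathbb{E}_{P_\theta}[T_i]$. Differentiating once more under the integral sign (justified because $\psi$ is smooth on the open domain of natural parameters and $T$ has exponential moments there) yields
\[
\partial_i\partial_j\psi=\mathbb{E}_{P_\theta}[(T_i-\mathbb{E}T_i)(T_j-\mathbb{E}T_j)]=\mathbb{E}_{P_\theta}[\partial_i\ell_\theta\,\partial_j\ell_\theta]=g_{ij}(\theta),
\]
and one further differentiation gives the third cumulant, which equals the third central moment:
\[
\partial_i\partial_j\partial_k\psi=\mathbb{E}_{P_\theta}[\partial_i\ell_\theta\,\partial_j\ell_\theta\,\partial_k\ell_\theta]=t_{ijk}(\theta).
\]
This identifies $\Phi=\psi$ as a local (in fact global) potential for $t$ in the $\alpha=+1$-affine chart.

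For $\alpha=-1$ one passes to the dual chart via the Legendre transform $\Psi(\eta)=\theta^i(\eta)\,\eta_i-\psi(\theta(\eta))$; this is essentially the negative Shannon entropy of the family. Involutivity of the Legendre duality together with the identities $g^{ij}=\partial^i\partial^j\Psi$ and the transformation rule for the fully symmetric tensor $t$ under the change $\theta\leftrightarrow\eta$ produces a potential in the $\eta$-coordinates by the same computation applied to $\Psi$. Alternatively, one may invoke the result cited in the footnote (Jost, Theorem 4.4), which is available precisely because the previous theorem supplies flatness and the associativity lemma supplies the compatibility of $t$ with $g$.

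The main obstacle I anticipate is not analytic but bookkeeping: the potentiality condition is a statement relative to a distinguished affine chart, so one must be careful to pair $\alpha=+1$ with the $\theta$-chart and $\alpha=-1$ with the $\eta$-chart, and to verify that the computation of $\partial_i\partial_j\partial_k\psi$ genuinely reproduces $t_{ijk}$ as defined by the expectation formula. Once the cumulant generating function interpretation of $\psi$ is recognized, the identification of $\Phi$ is immediate and the corollary follows.
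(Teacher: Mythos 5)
Your argument is correct, but it takes a more constructive route than the paper does. The paper derives potentiality abstractly: the preceding proposition establishes that the (WDVV) system is (uniquely) integrable because the curvature of $\overset{\alpha}{\nabla}$ vanishes for $\alpha=\pm1$, and the corollary then follows from the general principle (Manin's Theorem 1.5, or Theorem 4.4 of \cite{Jost} as cited in the footnote) that flatness of the structure connection guarantees the local existence of a potential $\Phi$ with $t(\partial_a,\partial_b,\partial_c)=\partial_a\partial_b\partial_c\Phi$; the footnote's dual coordinate systems $(\theta,\eta)$ and convex potentials are invoked, not computed. You instead exhibit the potential explicitly: writing $\rho_\theta=\exp(\theta^iT_i-\psi(\theta))$ and differentiating the log-partition function, you verify $\partial_i\partial_j\psi=g_{ij}$ and $\partial_i\partial_j\partial_k\psi=t_{ijk}$ (the identity of third cumulants with third central moments is exactly what makes the last step work), so $\psi$ is the sought potential in the $e$-affine chart, with the Legendre dual $\Psi$ handling $\alpha=-1$. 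Your version buys concreteness -- it identifies the abstract potential with the cumulant generating function and its Legendre transform, which is the classical Hessian-manifold picture of exponential families -- at the cost of being specific to the exponential-family parametrization; the paper's argument is shorter and would apply to any pre-Frobenius statistical manifold whose structure connection happens to be flat. Your only loose end is the sign/index bookkeeping for $t$ in the $\eta$-chart under the Legendre transform, which you flag but do not carry out; since the paper itself delegates that step to the cited theorem, this does not constitute a gap.
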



\section{Statistical Gromov--Witten invariants and learning}


\vskip-.2cm

 We introduce Gromov--Witten invariants for statistical manifolds, in short (GWS). Originally,  those invariants are rational numbers that count (pseudo) holomorphic curves under some conditions on a (symplectic) manifold. 
The (GWS) encode deep geometric aspects of statistical manifolds, (similarly to its original version) concerning the intersection of (para-)holomorphic curves. Also, this plays an important role in the learning process, since it determines whether a system has succeeded in learning or failed. 
 
Let us consider the (formal) Frobenius manifold $(H,g)$. We denote $k$ a (super)commutative $\mathbb{Q}$-algebra. Let $H$ be a $k$-module of finite rank and $g:H\otimes H\to k$  an even symmetric pairing (which is non degenerate). We denote $H^*$ the dual to $H$. 
The structure of the Formal Frobenius manifold on $(H,g)$ is given by an even potential ${\bf \Phi} \in k[[H^*]]$:
\[{\bf \Phi}=\sum_{n\geq 3}\frac{1}{n!}Y_n,\]
where $Y_n\in (H^*)^{\otimes n}$ can also be considered as an even symmetric map $H^{\otimes n} \to k$. This system of {\it Abstract Correlation Functions} in $(H,g)$ is a system of (symmetric, even) polynomials. The Gromov--Witten invariants appear in these multi-linear maps. 

We go back to statistical manifolds. Let us consider the discrete case of the exponential family formula:

\begin{equation}\label{E:2}\sum_{\omega\in \Omega} \exp\{-\sum \beta^jX_j(\omega)\}=
\sum_{\omega\in \Omega}\sum_{m\geq 1}\frac{1}{m!}\left\{ -\sum_{j} \beta^jX_j(\omega)\right\}^{\otimes m}, \end{equation}
where $\beta=(\beta_0,....,\beta_n)\in \mathbb{R}^{n+1}$ is a canonical affine parametrisation, $X_j(\omega)$ are directional co-vectors, belonging to a finite cardinality $n+1$ list $\mathcal{X}_n$ of random variables. These co-vectors represent necessary and sufficient statistics of the exponential family. 
We have $X_0(\omega)\equiv 1$, and $X_1(\omega),\dots ,X_{n}(\omega)$ are linearly independent co-vectors. 
The family in (\ref{E:2}) describes an analytical $n$-dimensional hypersurface in the statistical manifold. It can be uniquely determined by $n+1$ points in general position. 


\begin{definition}
Let $k$ be the field of real numbers. Let $S$ be the statistical manifold. The Gromov--Witten invariants for statistical manifolds (GWS)  are given by the multi-linear maps:
 \[\tilde{Y}_n:S^{\otimes n}\to k.\] 
\end{definition}
One can also write them as follows: \[\tilde{Y}_n\in \left(-\sum_{j}\beta^jX_j(\omega)\right)^{\otimes n}.\]

These invariants appear as part of the potential function $\tilde{\bf \Phi}$ which is a Kullback--Liebler entropy function. 

One can write the relative entropy function:
\begin{equation}\label{E:3}\tilde{\bf \Phi}= ln \sum_{\omega\in\Omega} \exp{(-\sum_{j}\beta^jX_j(\omega))}.\end{equation}

Therefore, we state the following:

\begin{proposition}
The entropy function $\tilde{\bf \Phi}$ of the statistical manifold relies on the (GWS).
\end{proposition}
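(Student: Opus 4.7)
The plan is to exhibit $\tilde{\bf \Phi}$ as a formal series in the canonical affine parameters $\beta^j$ whose coefficients at every order are built from the (GWS) $\tilde{Y}_n$. I would begin from the expansion already recorded in (\ref{E:2}): the partition function $Z(\beta):=\sum_{\omega\in\Omega}\exp(-\sum_j\beta^jX_j(\omega))$ equals $\sum_{m\geq 0}\frac{1}{m!}\sum_{\omega\in\Omega}(-\sum_j\beta^jX_j(\omega))^{\otimes m}$. By the very definition of the (GWS) given just above the proposition, the order-$m$ piece of this series is, up to the factor $1/m!$, precisely the multilinear symmetric map $\tilde{Y}_m$ evaluated on the $m$ copies of $-\sum_j\beta^jX_j$. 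Hence $Z(\beta)$ is, tautologically, the generating function of the statistical Gromov--Witten invariants.

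Next I would apply the formal logarithm. Writing $Z(\beta)=Z(0)(1+u(\beta))$ with $u(0)=0$ and using $\ln(1+u)=\sum_{k\geq 1}(-1)^{k+1}u^k/k$, formula (\ref{E:3}) gives $\tilde{\bf\Phi}$ as a formal power series whose coefficient at order $n$ is a universal polynomial in $\tilde{Y}_1,\dots,\tilde{Y}_n$: these are nothing but the classical moment--to--cumulant relations, reading the $X_j(\omega)$ as random variables under the exponential law. In particular the highest-order contribution at each level depends linearly on $\tilde{Y}_n$, so the relation is triangular and invertible. This simultaneously shows that $\tilde{\bf\Phi}$ is entirely determined by the (GWS) and, conversely, that each $\tilde{Y}_n$ can be recovered from the derivatives of $\tilde{\bf\Phi}$ at $\beta=0$, establishing the claimed dependence.

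The main subtlety I anticipate is bookkeeping the parallel between the scalar formal series $\tilde{\bf\Phi}\in k[[H^*]]$ and the tensorial family $\{\tilde{Y}_n\}$ of even symmetric multilinear maps in the spirit of the Abstract Correlation Functions of the formal Frobenius manifold $(H,g)$ recalled above, so that the moment/cumulant expansion respects the symmetric multilinear structure on $H^{\otimes n}$ and does not merely produce an identity of scalar Taylor coefficients. Once this compatibility is verified — essentially by noting that the $n$-th symmetric derivative of $\tilde{\bf\Phi}$ at $\beta=0$ is the $n$-th joint cumulant of the sufficient statistics $X_j$ and hence a polynomial in the joint moments, i.e.\ in the $\tilde{Y}_m$ with $m\leq n$ — the statement that the entropy function relies on the (GWS) follows immediately.
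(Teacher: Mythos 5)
Your proposal is correct and follows essentially the same route as the paper: both arguments start from the tensor-power expansion in (\ref{E:2}), identify the order-$n$ pieces with the maps $\tilde{Y}_n$, and observe that $\tilde{\bf\Phi}$ in (\ref{E:3}) is built from this series. The paper stops at this one-sentence observation, whereas you additionally carry out the formal logarithm and record the moment-to-cumulant dictionary (including the triangular invertibility showing the $\tilde{Y}_n$ are recoverable from $\tilde{\bf\Phi}$), which is a genuinely more precise statement of the same dependence.
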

\begin{proof}
Indeed, since $\tilde{\bf \Phi}$, in formula (\ref{E:3}) relies on the polylinear maps $\tilde{Y}_n\in \left(-\sum_{j}\beta^jX_j(\omega)\right)^{\otimes n}$,
 defining the (GWS), the statement follows. 
\end{proof}

Consider the tangent fiber bundle over $S$, the space of probability distributions, with Lie group $G$. We denote it by $(TS,S,\pi,G,F)$, where $TS$ is the total space of the bundle $\pi: TS\to S$ is a continuous surjective map and $F$ the fiber.  
Recall that for any point $\rho$ on $S$, the tangent space at $\rho$ is isomorphic to the space of bounded, signed measures vanishing on an ideal $I$ of the $\sigma-$algebra. The Lie group $G$ acts (freely and transitively) on the fibers by $f\overset{h}{\mapsto} f+h$, where $h$ is a parallel transport, and $f$ an element of the total space (see \cite{CoMa} for details). 

\begin{remark} 
Consider the (local) fibre bundle $\pi^{-1}(\rho)\cong \{\rho\}\times F$. Then $F$ can be identified to a module over the algebra of paracomplex numbers $\fC$ (see \cite{CoMa,CoCoNen} for details). By a certain change of basis, this rank 2 algebra generated by $\{e_1,e_2 \}$, can always be written as $\langle 1,\e |\, \e^2=1 \rangle$.
\end{remark}
We call a {\it canonical basis} for this paracomplex algebra,  the one given by: $\{e_+,e_- \}$, where $e_{\pm}=\frac{1}{2}(1\pm \e).$
Moreover, any vector $X=\{x^{i}\}$ in the module over the algebra is written as $\{x^{ia}e_{a}\}$, where $a\in\{1,2\}$.

\begin{lemma}
Consider the fiber bundle $(TS,S,\pi,G,F)$.
Consider a path $\gamma$ being a geodesic in $S$. Consider its fiber $F_{\gamma}$. Then, the fiber contains two connected compontents: $(\gamma^+,\gamma^-)$, lying respectively in totally geodesic submanifolds $E^+$ and $E^-$. 
\end{lemma}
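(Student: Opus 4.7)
The plan is to exploit the paracomplex structure on the fibers that the remark just before the lemma sets up. Since the fiber $F$ is a module over $\fC=\langle 1,\e\mid \e^{2}=1\rangle$ and the canonical basis $\{e_{+},e_{-}\}$ consists of the orthogonal idempotents $e_{\pm}=\frac{1}{2}(1\pm\e)$, satisfying $e_{\pm}^{2}=e_{\pm}$, $e_{+}e_{-}=0$ and $e_{+}+e_{-}=1$, the fiber splits canonically as a direct sum of $\fC$-submodules $F=e_{+}F\oplus e_{-}F$. I would verify these identities first, since they force every $X=x^{ia}e_{a}$ to decompose uniquely as $X=X^{+}+X^{-}$ with $X^{\pm}\in e_{\pm}F$, and the two summands intersect only at the origin.

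Next I would transport this algebraic splitting to the bundle level. Restricting $(TS,S,\pi,G,F)$ to the geodesic $\gamma\subset S$ produces a $\fC$-module bundle $F_{\gamma}$, and fiberwise multiplication by $e_{\pm}$ is a smooth bundle projector. This gives a globally defined decomposition $F_{\gamma}=\gamma^{+}\sqcup\gamma^{-}$ (at the level of underlying sets, using that the two eigen-submodules of $\e$ are disjoint away from zero). Because $e_{\pm}$ are parallel under the action of the structure group $G$ (the translations $f\mapsto f+h$ preserve the eigenspace decomposition of $\e$), the two pieces are each preserved by parallel transport along $\gamma$; this gives the two connected components $\gamma^{+}$ and $\gamma^{-}$.

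To identify each component with a totally geodesic submanifold $E^{\pm}$, I would invoke the hypothesis recalled in the introduction that exponential families are totally geodesic maximal submanifolds and that, by the previous theorem, the structure connections $\overset{\alpha}{\nabla}$ are flat precisely for $\alpha=\pm 1$. In the dual coordinate systems $(\theta,\eta)$ appearing in the corollary, $\theta$ is $(+1)$-affine and $\eta$ is $(-1)$-affine, so the integral leaves of $\overset{+1}{\nabla}$ and $\overset{-1}{\nabla}$ are totally geodesic submanifolds, which I take as $E^{+}$ and $E^{-}$ respectively. The paracomplex idempotents $e_{\pm}$ are precisely the projectors onto the tangent spaces of these dual foliations, so $\gamma^{\pm}\subset E^{\pm}$ by construction.

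The main obstacle I expect is the last identification step: one must argue rigorously that the algebraic projectors $e_{\pm}$ on the fiber coincide with the geometric projectors onto the two dual flat foliations. This uses the compatibility between the paracomplex structure inherited from the fiber (as described in \cite{CoMa,CoCoNen}) and the $\pm 1$-affine structures on $S$; once that compatibility is spelled out, the totally-geodesic property of $E^{\pm}$ follows from the flatness of $\overset{\pm 1}{\nabla}$ established in the previous section, and the disconnectedness of $\gamma^{+}$ and $\gamma^{-}$ follows from the orthogonality $e_{+}e_{-}=0$.
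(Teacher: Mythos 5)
Your proposal follows essentially the same route as the paper: the fiber over $\gamma$ splits into the two eigencomponents of the paracomplex unit $\e$ (equivalently, the images of the idempotents $e_{\pm}=\frac{1}{2}(1\pm\e)$), which yields the two components $\gamma^{+},\gamma^{-}$ sitting in $E^{+}$ and $E^{-}$. The one place where you diverge is the final step: the paper does not argue that $E^{\pm}$ are totally geodesic at all, it simply cites Lemma~3 of the companion paper \cite{CoCoNen}, whereas you attempt to derive this from the flatness of $\overset{\pm1}{\nabla}$ and the dual $(\theta,\eta)$ coordinate systems, identifying the algebraic projectors $e_{\pm}$ with the projectors onto the two dual flat foliations. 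That identification is indeed the crux, and you are right to flag it as the main obstacle --- as written it is a plausible sketch rather than a proof, since the compatibility between the paracomplex module structure on the fiber and the $\pm1$-affine structures on $S$ is exactly what the cited external lemma is supposed to supply. Your treatment of the splitting itself (orthogonal idempotents, uniqueness of the decomposition, invariance under the $G$-action $f\mapsto f+h$) is more careful than the paper's, which asserts the two components directly from the coordinate expression $\{\gamma^{ia}e_{a}\}$; so on that portion you have filled in detail the paper leaves implicit, and on the totally-geodesic portion you have replaced a citation by an argument sketch that would still need to be completed.
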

\begin{proof}
Consider the fiber above $\gamma$. Since for any point of $S$, its the tangent space is identified to  module over paracomplex numbers. This space is decomposed into a pair of subspaces (i.e. eigenspaces with eigenvalues $\pm \e$) (see \cite{CoCoNen}). 
The geodesic curve in $S$ is a path such that $\gamma=(\gamma^i(t)): t\in [0,1]\to S$. In local coordinates, the fiber budle is given by $\{\gamma^{ia}e_{a}\}$, and $a\in \{1,2\}$. Therefore, the fiber over $\gamma$ has two components $(\gamma^+,\gamma^-)$. Taking the canonical basis for $\{e_1,e_2\}$, implies that $(\gamma^+,\gamma^-)$ lie respectively in the subspaces $E^+$ and $E^-$. These submanifolds are totally geodesic in virtue of  Lemma 3 in \cite{CoCoNen}. \end{proof}

We define a learning process through the Ackley--Hilton--Sejnowski method\, \cite{AHS}, which consists in minimising the Kullback--Leibler divergence. 
By Propositions 2 and 3 in\, \cite{CN97}, we can restate it geometrically, as follows:
\begin{proposition}\label{P:CoNen}
The learning process consists in determining if there exist intersections of the paraholomorphic curve $\gamma^+$ with the orthogonal projection of $\gamma^-$ in the subspace $E^+$. 
\end{proposition}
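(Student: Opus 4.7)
The plan is to reduce the geometric statement to the analytic content of the Ackley--Hilton--Sejnowski (AHS) prescription and then translate it through the paracomplex decomposition established in the preceding lemma.

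First, I would unpack the AHS method: learning is implemented as gradient descent on the Kullback--Leibler divergence $D(P\|Q_\beta)$ between a target distribution $P$ and the parametric model $Q_\beta$ ranging over the exponential family. Critical points are those at which the differential of $D$ along $TS$ vanishes, which, by the dually flat structure supplied by the corollary above (with $\alpha=\pm 1$), is equivalent to an orthogonality condition with respect to the Fisher--Rao metric $g$. This is precisely the setting in which Propositions 2 and 3 of \cite{CN97} apply.

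Second, I would invoke the paracomplex splitting $T_\rho S \cong E^+ \oplus E^-$ supplied by the previous lemma. Under this splitting the geodesic $\gamma$ lifts to the two connected components $\gamma^+ \subset E^+$ and $\gamma^- \subset E^-$, which correspond to the dual $\alpha=+1$ and $\alpha=-1$ structure connections (the $e$- and $m$-geodesics of information geometry). The dually flat Pythagorean identity then expresses $D(P\|Q_\beta)$ as a squared ``distance'' between $\gamma^+$ and the orthogonal $g$-projection of $\gamma^-$ into $E^+$. I would finish by observing that minimization of $D$ amounts to driving this squared distance to zero; geometrically this is precisely the statement that the projection $\pi^+(\gamma^-)$ meets the para-holomorphic curve $\gamma^+$ inside $E^+$, so that existence of such intersection points encodes successful learning and their absence encodes failure.

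The main obstacle I expect is the careful identification between the $m$-projection that appears in the dual-flat Pythagorean theorem (and in Propositions 2 and 3 of \cite{CN97}) and the paracomplex orthogonal projection onto the $+\e$-eigenspace $E^+$. This requires verifying that the canonical change of basis from $\{e_1,e_2\}$ to $\{e_+,e_-\}=\{\tfrac{1}{2}(1+\e),\tfrac{1}{2}(1-\e)\}$ intertwines the information-geometric projection used by AHS with the orthogonal projection $\pi^+$ associated with the paracomplex decomposition; once this identification is in hand, the proposition follows by direct substitution into the results of \cite{CN97}.
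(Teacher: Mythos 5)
The paper offers no proof of this proposition at all: it is presented as a geometric \emph{restatement} of the Ackley--Hilton--Sejnowski minimisation of the Kullback--Leibler divergence, justified solely by the citation of Propositions 2 and 3 of \cite{CN97} together with the preceding lemma on the splitting of the fiber into the components $(\gamma^+,\gamma^-)$ lying in the totally geodesic submanifolds $E^+$ and $E^-$. Your proposal follows essentially the same route --- AHS minimisation, the flat ($\alpha=\pm1$) structure, the paracomplex splitting, and an appeal to \cite{CN97} --- so in that sense it is faithful to the paper. Where you go beyond the paper is in naming explicitly the one substantive step that the paper leaves implicit: the identification of the information-geometric ($m$-type) projection appearing in the dual-flat Pythagorean relation with the orthogonal projection onto the $+\e$-eigenspace $E^+$ coming from the canonical basis $e_{\pm}=\tfrac{1}{2}(1\pm\e)$. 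You correctly flag this as the main obstacle, but neither your sketch nor the paper actually carries out that verification; as written, both arguments reduce the proposition to the external reference. If you want a self-contained proof, that intertwining of the two projections is precisely what you would need to supply, and it is not a formality: the $g$-orthogonality used in the Pythagorean identity and the eigenspace decomposition of the paracomplex structure are a priori different data, and their compatibility rests on the compatibility of $g$ with the multiplication $\circ$ (i.e.\ the associativity identity $g(u\circ v,w)=g(u,v\circ w)$ of Lemma~\ref{L:Ass}) restricted to the idempotents $e_{\pm}$.
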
 
In particular, a learning process succeeds whenever the distance between a geodesic $\gamma^+$ and  the projected one in $E^+$ shrinks to become as small as possible. 

More formally, as was depicted in\, \cite{BCN99} (sec. 3) let us denote by $\Upsilon$ the set of (centered) random variables over $(\Omega,\mathcal{F},P_{\theta})$
which admit an expansion in terms of the scores under the following form:
\[\Upsilon_P= \{X\in \mathbb{R}^{\Omega}\, |\, X-\mathbb{E}_P[X]=g^{-1}(\mathbb{E}_P[Xd\ell]), d\ell \}. \]
By direct calculation, one finds that the log-likelihood $\ell= ln\rho$ of the usual (parametric) families of probability distributions belongs to 
$\Upsilon_p$  as well as the difference $\ell -\ell^*$  of log-likelihood of two probabilities of the same family. 
Being given a family of probability distributions such that $\ell \in  \Upsilon_P$ for any $P$, let $\mathcal{U}_P$, let us denote $P^*$ the set
 such that $\ell-\ell^*\in  \Upsilon_p$. Then, for any $P^*\in  \mathcal{U}_p$, we define $K(P,P^*)=\mathbb{E}_P[\ell - \ell^*]$.

\begin{theorem}
Let $(S,g,t)$ be  statistical manifold. Then, the (GWS) determine the learning process. 
\end{theorem}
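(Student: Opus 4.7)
The plan is to chain together three ingredients already established in the paper: the multilinear expansion of the relative entropy potential $\tilde{\Phi}$ in terms of the (GWS) $\tilde{Y}_n$, the identification of the Ackley--Hilton--Sejnowski learning procedure with the minimisation of the Kullback--Leibler divergence $K(P,P^*)$, and the paracomplex geometric reformulation of this minimisation recorded in Proposition \ref{P:CoNen}.

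First I would start from (\ref{E:3}) and expand the exponential as in (\ref{E:2}), so that the symmetric Taylor coefficients of $\tilde{\Phi}$ in the canonical affine parameters $\beta^j$ are precisely the (GWS); this is the content of the preceding Proposition and yields
\[
\tilde{\Phi} \;=\; \sum_{n\geq 3}\frac{1}{n!}\,\tilde{Y}_n
\]
up to the affine terms absorbed into the normalisation. Hence the potential is fully encoded by the family $\{\tilde{Y}_n\}_{n\geq 3}$.

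Next I would identify $K(P,P^*)=\mathbb{E}_P[\ell-\ell^*]$ with the Bregman-type divergence associated to $\tilde{\Phi}$: for $P=P_\beta$ and $P^*=P_{\beta^*}$ in the same exponential family,
\[
K(P,P^*) \;=\; \tilde{\Phi}(\beta^*)-\tilde{\Phi}(\beta)-\langle \nabla\tilde{\Phi}(\beta),\,\beta^*-\beta\rangle,
\]
which follows from the convex-potential Corollary established after the Frobenius theorem, combined with the flat-affine exponential parametrisation for $\alpha=\pm 1$. Minimising $K$ is therefore a variational problem whose governing data are exactly the coefficients $\tilde{Y}_n$. Invoking Proposition \ref{P:CoNen}, the learning problem is rephrased as the geometric search for intersections of the paraholomorphic curve $\gamma^+$ with the orthogonal projection of $\gamma^-$ into $E^+$; these intersection points correspond to the zeros of the Bregman functional just written, so their existence is controlled by $\{\tilde{Y}_n\}$ and the (GWS) determine whether the learning process succeeds or fails.

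The step I expect to be the main obstacle is the last identification: one has to check that the paracomplex splitting of the tangent bundle interacts with the Bregman potential in such a way that the geometric intersection condition coincides with the vanishing of this potential along $(\gamma^+,\gamma^-)$. This should follow from the flatness of $\overset{\pm 1}{\nabla}$ proved earlier, combined with the total geodesy of $E^\pm$ (Lemma 3 of \cite{CoCoNen}), but making the compatibility explicit is the essential verification needed to reduce the dynamical learning problem to a purely algebraic statement about the multilinear coefficients $\tilde{Y}_n$.
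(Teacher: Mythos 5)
Your proposal is correct and follows essentially the same chain as the paper's own proof: learning is identified with minimising $K(P,P^*)$, $K$ is tied to the potential $\tilde{\bf \Phi}$, and $\tilde{\bf \Phi}$ is built from the multilinear maps $\tilde{Y}_n$ defining the (GWS), with Proposition~\ref{P:CoNen} supplying the geometric reformulation. The Bregman-divergence identity you insert actually makes explicit the step the paper only asserts as ``directly related,'' and the final compatibility check you flag as the main obstacle is likewise left unverified in the paper's own argument.
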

\begin{proof}
Since $K(P,P^*)=\mathbb{E}_P[\ell - \ell^*]$, this implies that $K(P,P^*)$ is minimised whenever there is a successful learning process. 
The learning process is by definition given by  {\it deformation} of a pair of geodesics, defined respectively in the pair of totally geodesic manifolds $E^+, E^-$. Therefore, the (GWS), which arise in the $\tilde{Y}_n$ in the potential function $\tilde{\bf \Phi}$, which is directly related to the relative entropy function $K(P,P^*)$. Therefore, the (GWS) determine the learning process.
 \end{proof}

Similarly as in the classical (GW) case, the (GWS) count intersection numbers of certain para-holomorphic curves. In fact, we have the following statement:
\begin{corollary}
Let  $(TS,S,\pi,G,F)$ be the fiber bundle above. Then, the (GWS) determine the number of intersection of the projected $\gamma^-$ geodesic onto $E^+$, with the $\gamma^+\subset E^+$ geodesic.
\end{corollary}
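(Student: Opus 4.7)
The plan is to reduce this enumerative statement to the preceding theorem via the geometric description of the learning process. First, I would recall from the lemma above that the fiber $F_\gamma$ over a geodesic $\gamma$ in $S$ splits, under the canonical paracomplex basis $\{e_+,e_-\}$, into two components $(\gamma^+,\gamma^-)$ living in the two totally geodesic submanifolds $E^+$ and $E^-$. Proposition~\ref{P:CoNen} then recasts the learning process as the study of the intersection locus of the para-holomorphic curve $\gamma^+$ with the orthogonal projection $\pi_{E^+}(\gamma^-)$ inside $E^+$, so the ``number of intersections'' appearing in the corollary is a well-defined geometric quantity attached to the bundle $(TS,S,\pi,G,F)$.

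Next, I would invoke the previous theorem, which asserts that the (GWS) determine the learning process, together with the identification of the potential $\tilde{\mathbf{\Phi}}$ with the Kullback--Leibler divergence $K(P,P^*)=\mathbb{E}_P[\ell-\ell^*]$. Expanding $\tilde{\mathbf{\Phi}}$ as in equation~(\ref{E:3}) via the exponential-family decomposition~(\ref{E:2}) gives
\[
\tilde{\mathbf{\Phi}}=\sum_{n\geq 3}\frac{1}{n!}\tilde{Y}_n,\qquad \tilde{Y}_n\in\left(-\sum_{j}\beta^{j}X_j(\omega)\right)^{\otimes n},
\]
so that each multi-linear coefficient $\tilde{Y}_n$ is the $n$-th jet of the relative entropy along the canonical affine directions $\beta^j$. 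I would then observe that, because $K(P,P^*)$ vanishes precisely where the two para-holomorphic branches meet after projection onto $E^+$, the vanishing orders of $\tilde{\mathbf{\Phi}}$ along these directions record exactly the coincidence data of $\gamma^+$ with $\pi_{E^+}(\gamma^-)$.

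From here the enumerative conclusion follows by mimicking the classical (GW) argument in the paracomplex setting: the intersection number of $\gamma^+$ with $\pi_{E^+}(\gamma^-)$ is obtained by pairing the cycle class of each curve against the multi-linear form $\tilde{Y}_n$, using the Frobenius metric $g$ established in the theorem of Section~2 to transfer indices (as in the associativity identity of Lemma~\ref{L:Ass}). Because $(S,g,t)$ is genuinely Frobenius only for $\alpha=\pm1$, the two $\alpha$-affine systems $(\theta,\eta)$ give dual flat coordinates in which the potential is polynomial in $\beta^j$, and its symmetric coefficients tautologically count the transverse meeting points of the associated para-holomorphic curves.

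The main obstacle, in my view, is the transversality/multiplicity step: to turn the analytic statement ``$\tilde{Y}_n$ encodes the jets of $K(P,P^*)$'' into the genuinely enumerative statement ``$\tilde{Y}_n$ counts intersection points,'' one must verify that generic deformations of $\gamma^-$ inside $E^-$ produce only transverse intersections of $\pi_{E^+}(\gamma^-)$ with $\gamma^+$, and that non-transverse configurations contribute with the expected multiplicity. This requires a paracomplex analogue of the standard transversality lemmas used for pseudo-holomorphic curves, which the paper's framework (in particular the eigenspace decomposition $E^+\oplus E^-$ and the flatness of $\overset{\pm 1}{\nabla}$) makes plausible but does not yet make fully rigorous.
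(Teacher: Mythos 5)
Your core reduction is exactly the route the paper intends: the paper in fact prints no proof of this corollary at all, offering only the remark that ``similarly as in the classical (GW) case, the (GWS) count intersection numbers of certain para-holomorphic curves,'' so the implicit argument is precisely your first two steps --- the fiber over $\gamma$ splits into $(\gamma^+,\gamma^-)$ in $E^\pm$ by the lemma, Proposition~\ref{P:CoNen} identifies the learning process with the intersection locus of $\gamma^+$ and the projection of $\gamma^-$ into $E^+$, and the preceding theorem says the (GWS) determine the learning process; composing these gives the statement. Where you diverge is in your third and fourth paragraphs: the pairing of ``cycle classes'' against $\tilde{Y}_n$, the claim that the vanishing order of $\tilde{\mathbf{\Phi}}$ records coincidence data, and the paracomplex transversality/multiplicity analysis appear nowhere in the paper and are not needed to reproduce its argument --- but the gap you flag there is genuine and is a gap in the paper itself, not in your reading of it. Nothing in the text upgrades ``the (GWS) determine the learning process'' (a qualitative statement about the potential $\tilde{\mathbf{\Phi}}$ and $K(P,P^*)$) to the quantitative claim that they compute an intersection \emph{number}; no transversality, multiplicity, or invariance-under-deformation statement is proved for the para-holomorphic curves $\gamma^+$ and $\pi_{E^+}(\gamma^-)$. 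So your proposal both recovers the paper's (one-line) argument and correctly identifies the missing enumerative content; just be aware that the paper does not supply the machinery you sketch in your third paragraph, and that sketch would itself need the transversality lemma you describe before it could be made rigorous.
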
}





\end{document}